\theoremstyle{plain}
\newtheorem{theorem}{Theorem}
\theoremstyle{definition}
\theoremstyle{remark}
\newcommand{\VTM}{\mbox{\hspace{.01in}{\tt vtm}}}
\title{On some problems of Harju concerning squarefree arithmetic
  progressions in infinite words}
\author{
James Currie and Narad Rampersad\thanks{The authors are supported by
  NSERC Discovery Grants 03901-2017 (Currie) and 418646-2012 (Rampersad).}\\
Department of Mathematics and Statistics\\
University of Winnipeg\\
\url{{j.currie, n.rampersad}@uwinnipeg.ca}}
\date{\today}
\begin{document}
\maketitle

\begin{abstract}
  In a recent paper, Harju posed three open problems concerning
  square-free arithmetic progressions in infinite words.  In this note
  we solve two of them.
\end{abstract}

\section{Introduction}
The study of infinite words avoiding squares is a classical problem in
combinatorics on words.  A \emph{square} is a word of the form $xx$,
such as \texttt{tartar}.  One of the most well-studied squarefree
words \cite{Thu12, Hal64} is the word $\VTM = 012021012102012\cdots$
obtained by iterating the map $0 \mapsto 012$, $1 \mapsto 02$, $2
\mapsto 1$.

Harju \cite{Har18} studied the following question and showed that it
has a positive solution for all $k \geq 3$:
\begin{quote}
  Given $k$, does there exist an infinite squarefree sequence
  $(w_n)_{n \geq 0}$ over a ternary alphabet such that the subsequence
  $(w_{kn})_{n \geq 0}$ is squarefree?
\end{quote}
Carpi \cite{Car88}, Currie and Simpson \cite{CS02}, and Kao et
al.~\cite{KRSS08} also studied similar problems.  Harju ended his
paper with three open problems:
\begin{enumerate}
  \item Does there exist a squarefree sequence $(w_n)_{n \geq 0}$ over
    a ternary alphabet such that for every $k \geq 3$, the subsequence
    $(w_{kn})_{n \geq 0}$ contains a square?
  \item Do there exist pairs $(k,\ell)$ of relatively prime integers
    such that there exists a squarefree sequence $(w_n)_{n \geq 0}$
    over a ternary alphabet for which both $(w_{kn})_{n \geq
      0}$ and $(w_{\ell n})_{n \geq 0}$ are squarefree?
  \item It is true that for all squarefree words $(w_n)_{n \geq 0}$
    over a ternary alphabet, there exists a word $(v_n)_{n \geq 0}$
    and an integer $k \geq 3$ such that $(v_{kn})_{n \geq 0} =
    (w_n)_{n \geq 0}$?
\end{enumerate}
In this note we show that the word $\VTM$ gives a positive answer to
the first problem.  We also show that there a positive answer to the
third problem for every $k \geq 23$.

\section{The main results}
We recall that $\VTM = (v_n)_{n \geq 0}$ is the fixed point (starting with
$0$) of the morphism that maps $0 \mapsto 012$, $1 \mapsto 02$, $2
\mapsto 1$.

\begin{theorem}
  For each $k \geq 2$ the sequence $(v_{kn})_{n \geq 0}$ contains
  either the square $00$ or the square $22$.
\end{theorem}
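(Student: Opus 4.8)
The plan is to reduce everything to the classical binary description of $\VTM$ and then to elementary facts about the binary forms of the multiples of $k$. For $n\ge 0$ write $s_2(n)$ for the number of $1$'s in the binary expansion of $n$, and $e(n)$ for the length of the maximal block of $1$'s at its low-order end (so $e(n)=0$ exactly when $n$ is even). The first step is to invoke the classical fact — provable by a short induction, using that erasing the $1$'s from $\VTM$ leaves $(02)^\omega$ together with a block argument locating the $1$'s — that $v_n=1$ when $e(n)$ is odd, while $v_n$ is $0$ or $2$ according as $s_2(n)$ is even or odd when $e(n)$ is even. With this in hand, $(v_{kn})_{n\ge0}$ contains $00$ or $22$ if and only if there is an $n$ such that $kn$ and $k(n+1)$ both end in an even number of $1$'s and $s_2(kn)\equiv s_2(k(n+1))\pmod 2$.

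Two cases are immediate. If $v_k=0$ then $v_0=v_k=0$, so $n=0$ works. If $v_k=2$ then $k$ ends in an even number of $1$'s and $s_2(k)$ is odd; since $2k$ is even and $s_2(2k)=s_2(k)$, also $v_{2k}=2=v_k$, so $n=1$ works. This settles all $k$ with $v_k\neq1$, in particular all even $k$, so assume $v_k=1$; then $k$ is odd and $e(k)$ is odd. Let $\gamma\ge1$ be the number of trailing zeros of $k-1$. If $\gamma$ is even, pick a large $r$ (with $2^r>k$) satisfying $r\equiv s_2(k)\pmod2$ and take $n=2^r-1$: then $k(n+1)=k\cdot 2^r$ ends in no $1$'s and has digit sum $s_2(k)$, while $kn=(k-1)2^r+(2^r-k)$ has binary expansion ``(the bits of $k-1$) followed by (the $r$-bit complement of $k-1$)'', so $e(kn)=\gamma$ is even and $s_2(kn)=s_2(k-1)+\bigl(r-s_2(k-1)\bigr)=r\equiv s_2(k)$; hence $v_{kn}=v_{k(n+1)}\in\{0,2\}$. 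If $\gamma$ is odd but $s_2(k)$ is even, a parallel but less tidy choice works: fix $p$ with $2^{2p+2}>k$ and take $n$ so that $kn=q\,2^{2p+2}-1$ with $q$ odd and $q\,2^{2p+2}\equiv1\pmod k$ (such $q$ exist since $k$ is odd), so that $kn$ ends in exactly $2p+2$ ones; a short carry computation then shows that $kn$ and $k(n+1)=kn+k$ both end in an even number of $1$'s and have the same digit-sum parity, the $q$-dependent terms canceling precisely because $s_2(k)$ is even.

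The remaining sub-case, $v_k=1$ with $k-1$ having an odd number of trailing zeros and $s_2(k)$ odd, is the crux, and I expect it to absorb most of the work. The obstruction is structural: forcing either of $kn$, $k(n+1)$ to end in a long run of $0$'s makes the other end in an odd number of $1$'s, hence equal to $1$, and long runs of $1$'s fare no better once $s_2(k)$ is odd; so the witnessing $n$ cannot be ``clean,'' and one must instead manufacture a short run of $1$'s of even length on one of $kn$, $k(n+1)$ while keeping the high-order bits free to correct the digit-sum parity. Because the parity of $s_2$ along a fixed residue class of multiples of $k$ is determined by a single small representative, this reduces to exhibiting, for each of a few residues of $k$ modulo small powers of $2$, an explicit small $n$ (depending on that residue) that works; carrying out these residue-by-residue verifications via the binary formula for $v_n$ and an elementary carry count is the main obstacle I anticipate.
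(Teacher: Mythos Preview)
Your elementary binary-digit approach is attractive, and the parts you actually carry out are correct: the cases $v_k\in\{0,2\}$ are dispatched cleanly, and for $v_k=1$ your two constructions for the sub-cases ``$\gamma$ even'' and ``$\gamma$ odd with $s_2(k)$ even'' check out (the carry computations are right, and in the second one the key identity $s_2(k-1)=s_2(k)-1$ for odd $k$ is exactly what makes the parities match). But the final sub-case --- $v_k=1$, $\gamma$ odd, $s_2(k)$ odd --- is not proved, and the reduction you sketch for it does not work. You assert that ``the parity of $s_2$ along a fixed residue class of multiples of $k$ is determined by a single small representative,'' but the Thue--Morse parity $s_2(m)\bmod 2$ is not determined by $m$ modulo any fixed modulus (it is not eventually periodic), so no finite list of residues of $k$ modulo a bounded power of $2$ can control the digit-sum parities you need. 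This sub-case contains infinitely many $k$ (e.g.\ $k=7,25,31,41,\ldots$), and the smallest witnessing $n$ does not follow a uniform pattern: for $k=7$ and $k=31$ one has $n=4$, while for $k=25$ the first witness is $n=6$. As written, then, the argument has a genuine gap covering infinitely many values of $k$.

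The paper avoids this difficulty by a completely different decomposition. Rather than constructing $n$ explicitly, it first shows (by an automatic-sequence computation in Walnut) that for every $k\ge 2$ the word $\VTM$ contains some length-$(k{+}1)$ factor of the form $0u0$ or $2u2$, and then, for odd $k$, invokes a structural theorem of Blanchet-Sadri, Currie, Fox, and Rampersad guaranteeing that every factor of $\VTM$ occurs at positions hitting every residue class modulo $k$; the even case is reduced to the odd one via the generating automaton. The hard step --- placing the factor at a position $\equiv 0\pmod k$ --- is thus handled by a cited result about the distribution of factor occurrences in $\VTM$, precisely the place where your explicit-construction approach runs out of steam.
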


\begin{proof}
  The first part of the proof relies on the fact that $\VTM$ is a
  $2$-automatic sequence.  Berstel~\cite{Ber79} studied several
  different ways to generate the sequence $\VTM$; in particular, he
  showed that $\VTM$ is generated by the $2$-DFAO (\emph{deterministic
    finite automaton with output}) in Figure~\ref{vtm_aut}.  The
  automaton takes the binary representation of $n$ as input, and if
  the computation ends in a state labeled $a$, the automaton outputs
  $a$, indicating that $v_n = a$.

  \begin{figure}[htb]
    \centering
    \includegraphics[scale=0.75]{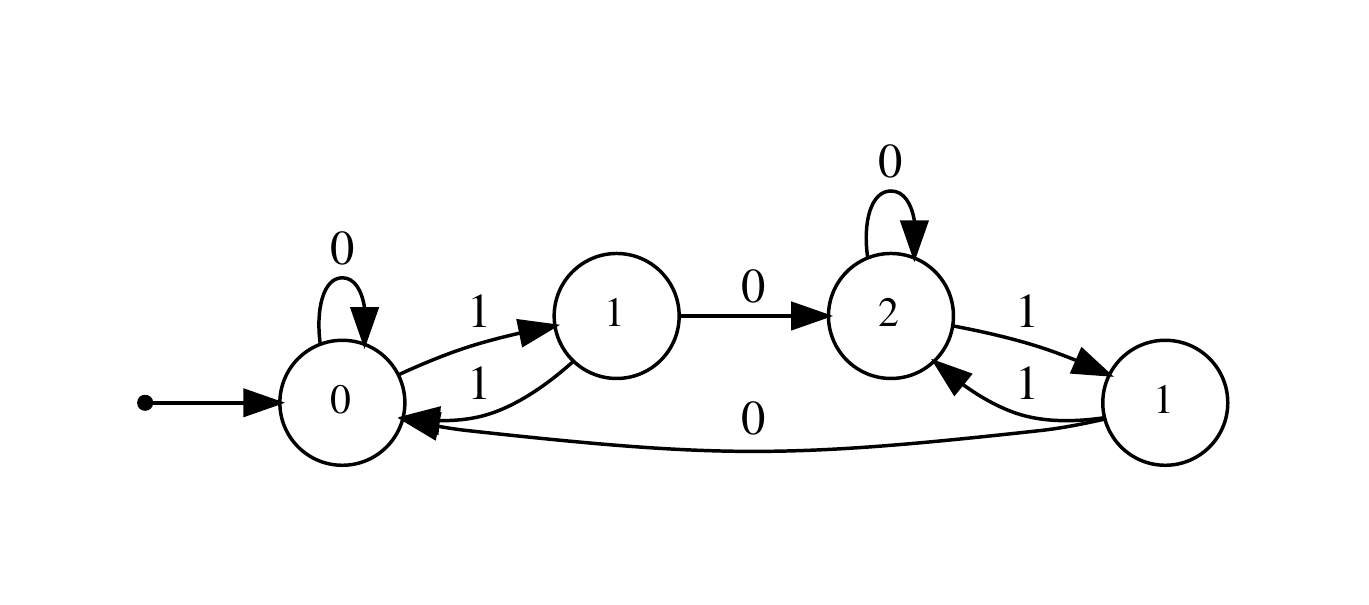}
    \caption{$2$-DFAO for $\VTM$}\label{vtm_aut}
  \end{figure}

  Since $\VTM$ is an automatic sequence, we can use
  Walnut~\cite{Walnut} to verify that it has certain combinatorial
  properties.  We verify with Walnut that for every $k \geq 2$, the
  sequence $\VTM$ contains a length $k+1$ factor of the form $0u0$ or
  $2u2$.  The Walnut command to do this is:
  \begin{center}
  \texttt{eval same\_first\_last ``Ei (VTM[i]=@0 \&
    VTM[i+k]=@0)|(VTM[i]=@2 \& VTM[i+k]=@2)'';}
  \end{center}
  The Walnut output for this command is the automaton in
  Figure~\ref{walnut_aut}, which shows that the given predicate holds
  for all $k \geq 2$.

  \begin{figure}[htb]
    \centering
    \includegraphics[scale=0.75]{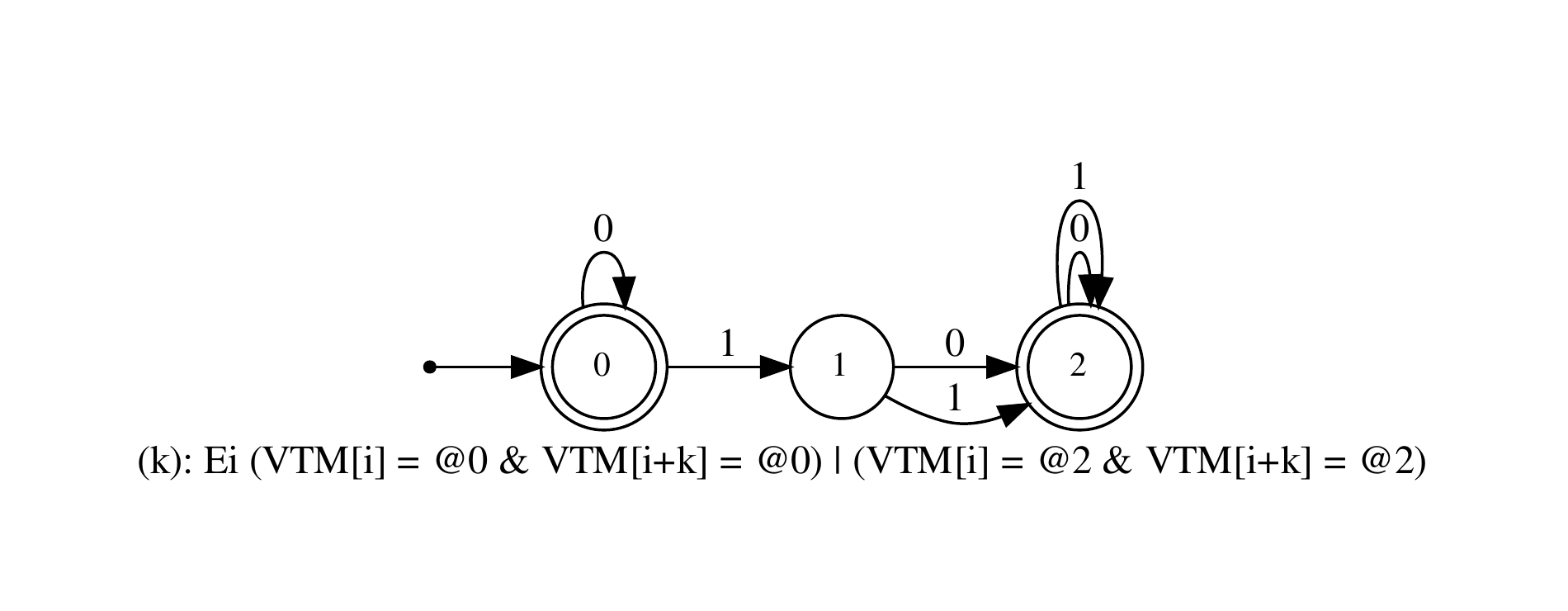}
    \caption{Walnut output automaton}\label{walnut_aut}
  \end{figure}

  To complete the proof, it suffices to show that $\VTM$ contains an
  occurrence of the length $k+1$ factor $0u0$ or $2u2$ at a position
  congruent to $0$ modulo $k$.  Blanchet-Sadri et
  al.~\cite[Theorem~3]{BSCFR14} showed that for each odd $k$ and each factor
  $w$ of $\VTM$, the set of positions at which $w$ occurs in $\VTM$
  contains all congruence classes modulo $k$, including, in
  particular, $0$ modulo $k$.  This establishes the claim for all odd
  $k \geq 2$.

  If $k$ is even, write $k = 2^ak'$, where $k'$ is odd.  Suppose that
  $k' \geq 3$.  We have already seen that $\VTM$ contains an
  occurrence of a length $k'+1$ factor $0u0$ or $2u2$ at a position $i
  \equiv 0 \pmod{k'}$.  From the automaton generating $\VTM$, we see
  that if $v_i=0$ (resp.\ $v_i=2$), then $v_{2^ai}=0$
  (resp.\ $v_{2^ai}=2$), which establishes the claim for $k' \geq 3$.

  Finally, suppose that $k$ is a power of $2$.  Then the binary
  representations of $k$ and $2k$ have the form $10^\ell$ and
  $10^{\ell+1}$ respectively, for some $\ell \geq 1$.  From the
  automaton generating $\VTM$, we see that $v_k = v_{2k} = 2$, as
  required.  This completes the proof.
\end{proof}

This resolves Harju's first problem in the affirmative.  For the
third, we use the result of Currie~\cite{Cur13} (also referenced in
\cite{Har18}), who showed that for $k \geq 23$ there exists a
\emph{cyclic squarefree $k$-uniform morphism} on the ternary alphabet.
Let $h_k : \{0,1,2\}^* \to \{0,1,2\}^*$ denote such a morphism.  By
\emph{cyclic} we mean that if $\sigma$ denotes the morphism defined by
the cyclic permutation of the alphabet $0 \mapsto 1 \mapsto 2 \mapsto
0$, then for $i \in \{0,1,2\}$ we have $h_k(i) = \sigma^i(h_k(0))$.
By \emph{$k$-uniform} we mean that for $i \in \{0,1,2\}$ the images
$h_k(i)$ all have length $k$.  Finally, we say that the morphism $h_k$
is \emph{squarefree} if it maps squarefree words to squarefree words.
Without loss of generality, suppose that $h_k(i)$ begins with the
letter $i$.  Now if $w = (w_n)_{n \geq 0}$ is a squarefree word, then
$v = h_k(w)$ is also squarefree and moreover $(v_{kn})_{n \geq 0} =
(w_n)_{n \geq 0}$, as required.  This resolves Harju's third problem,
and furthermore, shows that solutions exist for every $k \geq 23$.

\end{document}